\newcommand*{\mailto}[1]{\href{mailto:#1}{\nolinkurl{#1}}}
\newtheorem{theorem}{Theorem}[section]
\newtheorem{proposition}[theorem]{Proposition}
\newtheorem{lemma}[theorem]{Lemma}
\newtheorem{remark}[theorem]{Remark}
\newtheorem{corollary}[theorem]{Corollary}
\newcommand{\qq}{\quad\quad}
\newcommand{\nf}{\infty}
\newcommand{\E}{\mathrm{e}}
\newcommand{\al}{\alpha}
\newcommand{\be}{\beta}
\newcommand{\de}{\delta}
\newcommand{\De}{\Delta}
\newcommand{\ve}{\varepsilon}
\newcommand{\vp}{\varphi}
\newcommand{\om}{\omega}
\newcommand{\rrr}{\mathbf R}
\newcommand{\rn}{\mathbf R^n}
\newcommand{\ccc}{\mathbf C}
\newcommand{\stwo}{\mathbf S^2}
\newcommand{\sn}{\mathbf S^{n-1}}
\newcommand{\cf}{\mathcal F}
\newcommand{\ch}{\mathcal H}
\newcommand{\cs}{\mathcal S}
\newcommand{\intrn}{\int_{\rn}}
\newcommand{\p}{\partial}
\newcommand{\sech}{\mathrm{sech}}
\newcommand{\f}{\frac}
\newcommand{\dint}{\displaystyle\int}
\begin{document}

\title[On radial Fourier transforms]{On Fourier transforms of radial functions and distributions}

\author[L.\ Grafakos]{Loukas Grafakos}
\address{Department of Mathematics\\
University of Missouri\\
Columbia, MO 65211, USA}
\email{\mailto{grafakosl@missouri.edu}}
\urladdr{\url{http://www.math.missouri.edu/~loukas/}}

\author[G.\ Teschl]{Gerald Teschl}
\address{Faculty of Mathematics\\ University of Vienna\\
Nordbergstrasse 15\\ 1090 Wien\\ Austria\\ and International
Erwin Schr\"odinger
Institute for Mathematical Physics\\ Boltzmanngasse 9\\ 1090 Wien\\ Austria}
\email{\mailto{Gerald.Teschl@univie.ac.at}}
\urladdr{\url{http://www.mat.univie.ac.at/~gerald/}}
 
\thanks{J. Fourier Anal. Appl. {\bf 19}, 167--179 (2013)}
\thanks{Grafakos' research was supported by the NSF (USA) under grant DMS 0900946.
Teschl's work was supported by the Austrian Science Fund (FWF) under Grant No.\ Y330}


\subjclass[2010]{Primary 42B10, 42A10; Secondary 42B37}

\keywords{Radial Fourier transform, Hankel transform}

\begin{abstract}
 We find a formula that relates the Fourier transform of a radial function on $\mathbf{R}^n$ with the Fourier transform of the
same function defined on $\mathbf{R}^{n+2}$. This formula enables one to explicitly calculate the Fourier transform of any
radial function $f(r)$ in any dimension, provided one knows the Fourier transform of the one-dimensional function 
$t\mapsto f(|t|)$ and the 
two-dimensional function $(x_1,x_2)\mapsto f(|(x_1,x_2)|)$. We prove analogous results for radial tempered distributions. 
 \end{abstract}

\maketitle

\section{Introduction}

The Fourier transform of a function $\Phi$ in $L^1(\rn)$ is
defined by the convergent integral
\[
F_{n}(\Phi)(\xi)=\intrn \Phi(x) \E^{-2\pi i x\cdot \xi}\, dx\, .
\] 
If the function $\Phi$ is radial, i.e., $\Phi(x)= \vp(|x|)$ for some function $\vp$ on the line, 
then its Fourier transform is also radial and we use the notation 
\[
 F_{n}(\Phi)(\xi)= \cf_{n}(\vp)(r)\, , 
\]
where $r=|\xi|$. In this article, we will show that there is a relationship between 
$\cf_{n}(\vp)(r)$ and $\cf_{n+2}(\vp)(r)$ as functions of the positive real variable $r$.\linebreak

We have the following result.
\begin{theorem}\label{schwartz}
Let $n\ge 1$. Suppose that $f$ is a function on the real line such that the functions 
 $f(|\cdot |)$ are in $L^1(\rrr^{n+2}) $ and also in $L^1(\rn)$. Then we have 
 \begin{equation}\label{1000-1}
\cf_{n+2}(f)(r) = - \frac{1}{2\pi} \frac{1}{r} \frac{d }{dr} \cf_{n} (f)(r)\qquad r>0 \,. 
\end{equation}
Moreover, the following formula is valid for all even Schwartz functions $\vp$ on the real line:
\begin{equation}\label{55}
 \cf_{n+2} (\vp)(r)= \f{1}{2\pi}\f{1}{r^2} \cf_{n} \Big(s^{-n+1} \f{d}{ds} (\vp(s)s^n) \Big)(r), \qquad r>0 \,.
\end{equation}
\end{theorem}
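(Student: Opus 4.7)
\medskip
\noindent\textbf{Proof plan.} The plan is to work from the standard Bessel representation of the Fourier transform of a radial function,
\[
\cf_n(f)(r) = 2\pi r^{-(n/2-1)} \int_0^\infty f(s)\, J_{n/2-1}(2\pi r s)\, s^{n/2}\, ds,
\]
which follows from writing the Fourier transform of $f(|\cdot|)$ on $\rrr^n$ in polar coordinates and carrying out the angular integration (this produces a Bessel function). Both sides make sense under the hypothesis $f(|\cdot|)\in L^1(\rn)$.

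\medskip
\noindent\emph{Step 1: Proof of \eqref{1000-1}.} Write $\nu = n/2-1$ so that $\cf_{n+2}(f)$ corresponds to the index $\nu+1=n/2$. The key tool is the Bessel identity $\tfrac{d}{dz}\bigl[z^{-\nu}J_\nu(z)\bigr]=-z^{-\nu}J_{\nu+1}(z)$. Setting $z=2\pi r s$, a chain-rule computation yields
\[
\frac{d}{dr}\bigl[r^{-\nu} J_\nu(2\pi r s)\bigr] = -2\pi s\, r^{-\nu} J_{\nu+1}(2\pi r s).
\]
Differentiating the Bessel representation of $\cf_n(f)(r)$ under the integral sign, dividing by $r$, and comparing with the same representation at index $\nu+1$ produces exactly $-2\pi\, \cf_{n+2}(f)(r)$, which is \eqref{1000-1}. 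The hypothesis $f(|\cdot|)\in L^1(\rrr^{n+2})$, i.e.\ $\int_0^\infty |f(s)|\, s^{n+1}\,ds<\infty$, is precisely what is needed to dominate the differentiated integrand (using the uniform bound $|J_{\nu+1}(z)|\lesssim \min(z^{\nu+1},1)$) and to legitimize differentiation under the integral.

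\medskip
\noindent\emph{Step 2: Proof of \eqref{55}.} Given \eqref{1000-1}, it suffices to identify
\[
-r\,\frac{d}{dr}\cf_n(\vp)(r) \;=\; \cf_n\!\Bigl(s^{-n+1}\frac{d}{ds}\bigl(\vp(s)s^n\bigr)\Bigr)(r).
\]
Writing $\Phi(x)=\vp(|x|)$ and using the elementary rules $F_n(\p_{x_j}\Phi)(\xi)=2\pi i\xi_j F_n(\Phi)(\xi)$ and $\p_{\xi_j} F_n(\Phi)(\xi)=-2\pi i\, F_n(x_j\Phi)(\xi)$, the Euler operator computation gives
\[
\sum_j \xi_j \p_{\xi_j} F_n(\Phi)(\xi) = -\sum_j F_n\bigl(\p_{x_j}(x_j\Phi)\bigr)(\xi).
\]
On radial functions $\sum_j \xi_j \p_{\xi_j}$ is just $r\,d/dr$, and a direct calculation shows that $\sum_j \p_{x_j}(x_j\Phi)$ is the radial function $s^{-n+1}\tfrac{d}{ds}(s^n\vp(s))|_{s=|x|}$. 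Plugging this back into \eqref{1000-1} yields \eqref{55}. The Schwartz hypothesis on $\vp$ is more than enough to justify all manipulations (the new radial profile $s^{-n+1}\tfrac{d}{ds}(s^n\vp(s))=n\vp(s)+s\vp'(s)$ is itself a Schwartz function in $s$, even on $\rrr$).

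\medskip
\noindent\emph{Main obstacle.} The only nontrivial point is the rigorous differentiation under the integral in Step~1; both the small-$s$ and large-$s$ behavior of the integrand must be controlled uniformly in $r$ on compact subintervals of $(0,\infty)$. This is where the seemingly redundant assumption $f(|\cdot|)\in L^1(\rrr^{n+2})$ enters, supplying the additional factor $s$ that appears upon differentiating. Step~2 is then essentially algebraic once \eqref{1000-1} is in hand.
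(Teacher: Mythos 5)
Your Step 1 is essentially identical to the paper's: same Bessel representation, same recursion $\frac{d}{dz}[z^{-\nu}J_\nu(z)]=-z^{-\nu}J_{\nu+1}(z)$, and the same use of the $L^1(\mathbf R^{n+2})$ hypothesis (equivalently $\int_0^\infty |f(s)|\,s^{n+1}\,ds<\infty$) together with a decay bound on $J_{\nu+1}$ to justify differentiating under the integral. Your Step 2, however, takes a genuinely different route. The paper establishes \eqref{55} \emph{directly} from the Bessel representation by a single integration by parts in $s$, making no use of \eqref{1000-1}; in particular, in the paper \eqref{1000-1} and \eqref{55} are two independent corollaries of the Bessel identity. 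You instead \emph{derive} \eqref{55} from \eqref{1000-1}, by proving the auxiliary identity
\[
-r\,\frac{d}{dr}\,\cf_n(\vp)(r)=\cf_n\!\Bigl(s^{-n+1}\tfrac{d}{ds}\bigl(s^n\vp(s)\bigr)\Bigr)(r)
\]
purely from the standard Fourier rules $F_n(\p_{x_j}\Phi)=2\pi i\xi_j F_n(\Phi)$ and $\p_{\xi_j}F_n(\Phi)=-2\pi i F_n(x_j\Phi)$, plus the observation that the Euler operator $\sum_j x_j\p_{x_j}+n$ acts on radial profiles as $s^{-n+1}\tfrac{d}{ds}\,s^n$. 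This is correct (the needed fact that $\vp(|\cdot|)\in\cs(\rn)$ for even Schwartz $\vp$ is the Whitney-type statement the paper proves later), and what it buys is that \eqref{55} is seen to be algebraically equivalent to \eqref{1000-1} via elementary transform--derivative duality, with no further Bessel computation; the cost is that your proof of \eqref{55} is logically dependent on \eqref{1000-1}, whereas the paper's is not. Both arguments are sound.
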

Using the fact that the Fourier transform is a unitary operator on $L^2(\rn)$ we may extend \eqref{1000-1} to the case where the functions 
 $f(|\cdot |)$ are in $L^2(\rrr^{n+2}) $ and in $L^2(\rn)$. Moreover,
in Section~\ref{distr} we extend \eqref{1000-1} to tempered distributions. Applications are given in the last section.

\begin{corollary}
Let $f(r)$ be a function on $[0,\nf)$ and $k$ some positive integer such the functions $x\to f(|x|)$ are absolutely integrable over $\rn$ for all $n$ with $1 \le n \le 2k+2$. Then we have
\[
\mathcal F_{2k+1}(f)(\rho) = \f{1}{( 2\pi)^k}\sum_{\ell=1}^{k} \frac{(-1)^\ell (2k-\ell-1)!}{2^{k-\ell} (k-\ell)! \, (\ell-1)!} \, \frac{1}{\rho^{2k-\ell}}\, 
\bigg(\frac{d }{d\rho }\bigg)^\ell \mathcal F_{1}(f)(\rho)
\]
and
\[
\mathcal F_{2k+2}(f)(\rho) = \f{1}{(2\pi)^k}\sum_{\ell=1}^{k} \frac{(-1)^\ell (2k-\ell-1)!}{2^{k-\ell} (k-\ell)! \, (\ell-1)!} \,\frac{1}{\rho^{2k-\ell}}\, 
\bigg(\frac{d }{d\rho }\bigg)^\ell \mathcal F_{2}(f)(\rho).
\]
\end{corollary}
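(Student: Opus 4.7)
\bigskip

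\noindent\textbf{Proof plan.} The strategy is to iterate the identity (\ref{1000-1}) from Theorem \ref{schwartz} a total of $k$ times. Set $L = \frac{1}{\rho}\frac{d}{d\rho}$. Under the hypothesis that $f(|\cdot|) \in L^1(\mathbf{R}^n)$ for every $1 \le n \le 2k+2$, one may apply (\ref{1000-1}) successively to obtain
\begin{equation*}
\mathcal{F}_{2k+1}(f)(\rho) = \Bigl(-\tfrac{1}{2\pi}\Bigr)^{k} L^{k}\,\mathcal{F}_{1}(f)(\rho), \qquad
\mathcal{F}_{2k+2}(f)(\rho) = \Bigl(-\tfrac{1}{2\pi}\Bigr)^{k} L^{k}\,\mathcal{F}_{2}(f)(\rho).
\end{equation*}
So everything reduces to expanding the operator $L^{k}$ as a linear combination of the ordinary derivatives $(d/d\rho)^{\ell}$, $\ell = 1, \ldots, k$, with coefficients that are negative powers of $\rho$.

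\medskip

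\noindent The next step is to prove, by induction on $k \ge 1$, the operator identity
\begin{equation*}
L^{k} g(\rho) \;=\; \sum_{\ell=1}^{k}\frac{(-1)^{k+\ell}(2k-\ell-1)!}{2^{k-\ell}(k-\ell)!\,(\ell-1)!}\,\frac{1}{\rho^{\,2k-\ell}}\,g^{(\ell)}(\rho)
\end{equation*}
for smooth $g$ on $(0,\infty)$. The base case $k=1$ is immediate ($L g = \rho^{-1} g'$). For the induction step, apply $L$ to the expansion at level $k$, use $\frac{d}{d\rho}(\rho^{\ell - 2k} g^{(\ell)}) = (\ell - 2k)\rho^{\ell - 2k - 1} g^{(\ell)} + \rho^{\ell - 2k} g^{(\ell+1)}$, multiply by $1/\rho$, and reindex the shifted sum. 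The result is a sum over $\ell = 1, \ldots, k+1$ of terms $\rho^{\ell - 2(k+1)} g^{(\ell)}$, and the induction reduces to verifying the Pascal-type recursion
\begin{equation*}
c_{k+1,\ell} \;=\; (\ell - 2k)\,c_{k,\ell} \;+\; c_{k,\ell-1} \qquad (2 \le \ell \le k),
\end{equation*}
together with the boundary identities $c_{k+1,1} = (1-2k) c_{k,1}$ and $c_{k+1,k+1} = c_{k,k}$, where $c_{k,\ell} = \frac{(-1)^{k+\ell}(2k-\ell-1)!}{2^{k-\ell}(k-\ell)!(\ell-1)!}$. Each of these is a purely algebraic manipulation of factorials.

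\medskip

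\noindent Finally, combining the two ingredients gives
\begin{equation*}
\Bigl(-\tfrac{1}{2\pi}\Bigr)^{k} L^{k}\,g(\rho)
\;=\; \frac{1}{(2\pi)^{k}} \sum_{\ell=1}^{k} \frac{(-1)^{\ell} (2k-\ell-1)!}{2^{k-\ell}(k-\ell)!(\ell-1)!}\, \frac{1}{\rho^{\,2k-\ell}}\,g^{(\ell)}(\rho),
\end{equation*}
since $(-1)^{k}\cdot(-1)^{k+\ell} = (-1)^{\ell}$. Specializing $g = \mathcal{F}_{1}(f)$ and $g = \mathcal{F}_{2}(f)$ yields the two claimed formulas. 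The only genuine work is the induction verifying the explicit coefficients $c_{k,\ell}$; the iteration of Theorem \ref{schwartz} itself is automatic once the integrability hypotheses are in place across the required range of dimensions.
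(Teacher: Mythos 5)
Your proof is correct and follows exactly the route the paper has in mind. The paper simply says ``The Corollary can be obtained using \eqref{1000-1} by induction on $k$. The simple details are omitted,'' and your argument — iterating \eqref{1000-1} to get $\mathcal F_{2k+1} = (-\tfrac{1}{2\pi})^k L^k \mathcal F_1$ with $L = \rho^{-1}\frac{d}{d\rho}$, then proving the explicit expansion of $L^k$ by induction via the recursion $c_{k+1,\ell}=(\ell-2k)c_{k,\ell}+c_{k,\ell-1}$ (with the stated boundary cases) and checking that $(-1)^k\cdot(-1)^{k+\ell}=(-1)^\ell$ converts the coefficients into the claimed form — is precisely the omitted computation.
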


The Corollary can be obtained using \eqref{1000-1} by induction on $k$. The simple details are omitted. Again,  absolute  integrability can be replaced by square integrability.

\section{The proof}

The Fourier transform of an integrable radial function $f(|x|)$ on $\rn$ is given by
\begin{align*}
\cf_{n}(f)(|\xi|)=& 2\pi \dint_0^\infty f(s) \Big(\frac{s}{|\xi|}\Big)^{{\f n2}-1} J_{\f n2-1}(2\pi s |\xi| )\, s \, ds\\
=& {(2\pi)^{{{\f n2} }}} \dint_0^\infty f(s) \widetilde{J}_{\f n2-1}(2\pi s |\xi|)\, s^{n-1} ds \, , 
\end{align*}
where $\widetilde{J}_\nu(x)= x^{-\nu} J_\nu(x)$, and $J_\nu$ is the classical Bessel function 
of order $\nu$.
This formula can be found in many textbooks, and
we refer to, e.g., \cite[Sect.~B.5]{grafakos} or \cite[Sect.~IV.1]{stein-weiss} for a proof. 
Moreover, this formula makes sense for all integers $n\ge 1$, even $n=1$, in which case
\[ 
J_{-1/2}(t) = \sqrt{\frac{2}{\pi}} \frac{\cos t}{\sqrt{t}}\, . 
\]
Let us set 
\[
\ch_{\f n2 -1}(f)(r)=
{(2\pi)^{{{\f n2} }}} \dint_0^\infty f(s) \widetilde{J}_{\f n2-1}(2\pi s r)\, s^{n-1} ds \, . 
\]

Then we make use of B.2.(1) in \cite{grafakos}, i.e., the identity
\begin{equation}\label{987}
\f{d}{dr}\widetilde{J}_\nu (r) = -r \widetilde{J}_{\nu+1}(r),
\end{equation}
which is also valid when $\nu=-1/2$, since 
\[
J_{1/2}(t)= \sqrt{\frac{2}{\pi}} \frac{\sin t}{\sqrt{t}}\, .
\]
In view of \eqref{987}, it is straightforward to verify that
\[
- \frac{1}{r} \frac{d}{dr} \ch_{\f n2 -1} (f)(r) =2 \pi \ch_{\f n2} (f)(r) =2 \pi \ch_{\f {n+2}2 -1 } (f)(r)\, ,
\]
provided $f$ is such that interchanging differentiation with the integral defining 
$ \ch_{\f n2 -1}$ 
 is permissible. For this to happen, we need to have that
\[
\int_0^\infty |f(s) | \Big| \frac{d}{dr}\Big( \widetilde{J}_{\frac n2-1} (rs) \Big)\Big|s^{n-1} ds <\infty 
\]
and thus it will be sufficient to have
\begin{equation}\label{99}
\int_0^\infty |f(s)|\, rs^{2 } |\widetilde{J}_{\f n2}(rs)| s^{n-1} ds \le c
\int_0^\infty |f(s) | \frac{rs^{2 }}{(1+rs)^{\f{n+1}2} } s^{n-1} ds <\infty 
\end{equation}
since $|\widetilde{J}_{\f n2}(s)| \le c (1+s)^{-n/2-1/2} $. But since $f(|\cdot |)$ is
in $L^1(\rrr^{n+2})$ we have
\begin{equation}\label{999}
 \int_0^{1/r} |f(s) | s^{n+1}\, ds+ \int_{1/r}^\infty |f(s) | s^{\f{n+1}{2}}\, ds<\infty
\end{equation}
 and this certainly implies \eqref{99} for all $r>0$. 
We conclude \eqref{1000-1} whenever \eqref{999} holds. We note that the appearance of condition \eqref{999} 
is natural as indicated in \cite{samko} (Lemma 25.1).

To prove \eqref{55} we argue as follows. We have
\[
\ch_{\f n2 -1}\Big(r^{-n+1} \f{d}{dr} (\vp(r)r^n) \Big)(r)=
{(2\pi)^{{{\f n2} }}} \dint_0^\infty \f{d}{ds} \big(\vp(s)s^n\big) \, \widetilde{J}_{\f n2-1}(2\pi s r)\, ds 
\]
and integrating by parts the preceding expression becomes
\[
 (2\pi)^{\f n2 +2 }\dint_0^\infty \vp(s)s^n sr^2 \, \widetilde{J}_{\f {n+2}2-1}(2\pi s r)\, ds 
\] 
which is equal to $2\pi r^2 \ch_{\f{n+2}2-1} (\vp)(r)$. This proves \eqref{55}. 
 
\begin{remark}\label{remhankel}
Note that we have
\[
\ch_\nu(f)(r)= \frac{2\pi}{r^\nu} H_\nu(f(s)s^\nu)(2\pi r),
\]
where
\[
H_{\nu}(f)(r) = \int_0^\infty f(s) J_\nu(r s) s\, ds, \qquad \nu \ge -\frac{1}{2},
\]
is the Hankel transform. This of course ties in with the fact that the Hankel transform
also arises naturally as the spectral transformation associated with the radial part of
the Laplacian $-\Delta$; we refer to \cite[Sect.~5]{KST} and the references therein for further
information. Moreover, note that \cite{SP} contains the associated recursion from Theorem~\ref{schwartz}
for the Hankel transform, but only for even Schwartz functions. This recursion was rediscovered in
connection with the radial Fourier transform in \cite{SW} for the case of Schwartz functions. See also
\cite{LT} for related results.

A transference theorem for radial multipliers 
which exploits the connection between the Fourier transform of radial functions on 
$\mathbf R^n$ and $\mathbf R^{n+2}$ was obtained in \cite{CW}. This multiplier theorem is based
on an identity dual to \eqref{987}.
\end{remark}

\section{Radial distributions}

We denote by $\cs (\rn)$ the space of Schwartz functions on $\rn$ and by $\cs' (\rn)$ the space of tempered distributions on $\rn$. 
A Schwartz function is called radial if for all orthogonal transformations $A\in O(n)$ (that is, for all rotations on $\rn$) we have
\[
\vp = \vp\circ A\,.
\]
We denote the set of all radial Schwartz functions by $\cs_{rad}(\rn)$. For further background on radial distributions we refer to Treves \cite[Lect.~5]{treves}.
Observe that in the one-dimensional case the radial Schwartz functions
are precisely the even Schwartz functions, that is:
\[
\cs_{rad}(\rrr) =\cs_{even}(\rrr) = \{ \vp \in \cs(\rrr):\,\, \vp(x)=\vp(-x)\}.
\]
 Similarly, a distribution $u\in \cs'(\rn)$ is called radial if for all orthogonal transformations $A\in O(n)$ we have
\[
u = u\circ A\, .
\]
This means that 
\[
\langle \, u,\vp \, \rangle \, = \, \langle\, u ,\vp\circ A \, \rangle
\]
for all Schwartz functions $\vp$ on $\rn$. We denote by $\cs'_{rad}(\rn)$ the space of all 
radial tempered distributions on $\rn$. We also denote by $\sn$ the $(n-1)$-dimensional 
unit sphere on $\rn$ and by $\om_{n-1}$ its surface area. 

Given a general, non necessarily radial, Schwartz function there is a natural homomorphism 
\[
\cs(\rn) \to \cs_{rad}(\rrr), \quad \vp(x) \mapsto \vp^o(r)= \f{1}{\om_{n-1}}\int_{\sn} \vp(r \theta)\, d\theta
\]
with the understanding that when $n=1$, then $\vp^o(x)=\frac{1}{2}(\vp(x)+\vp(-x))$. 
Conversely, given an even Schwartz function on $\rrr$ we can define a corresponding radial Schwartz function
via
\[
\cs_{rad}(\rrr) \to \cs_{rad}(\rn), \quad \vp(r) \mapsto \vp^O(x)=\vp(|x|).
\]
The map $\vp\mapsto \vp^O$ is a homomorphism; the proof of this fact is omitted since a stronger statement
is proved at the end of this section. Both facts require the following lemma:

\begin{lemma}\label{whitney}
Suppose that $f$ is a smooth even function on $\rrr$. Then there is a smooth function $g$ on the real line such that 
\[
f(x) = g(x^2)
\]
for all $x\in \rrr$. Moreover, one has for $t\ge 0$
\begin{equation}\label{estg}
|g^{(k)}(t) | \le C(k) \sup_{0\le s \le \sqrt{t }} |f^{(2k)}(s)|.
\end{equation}
\end{lemma}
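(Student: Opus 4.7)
The plan is to differentiate $g(t):=f(\sqrt{t})$ repeatedly on $(0,\infty)$ and to read off both the required bound and the smoothness at $t=0$ from the resulting formula. The key observation is that for any smooth even function $h$ on $\rrr$, the quotient
\[
(Uh)(x):= \frac{h'(x)}{x}
\]
is again smooth and even: since $h'$ is odd with $h'(0)=0$, Taylor's theorem with integral remainder gives $h'(x)=x\int_0^1 h''(xu)\,du$, so $(Uh)(x)=\int_0^1 h''(xu)\,du$. A direct computation then yields $g'(t)=f'(\sqrt{t})/(2\sqrt{t})=\tfrac{1}{2}(Uf)(\sqrt{t})$ for $t>0$. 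Since $Uf$ is again smooth and even, iterating this calculation gives, by induction on $k$,
\[
g^{(k)}(t) \;=\; \frac{1}{2^k}\,(U^k f)(\sqrt{t}),\qquad t>0.
\]

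Iterating the integral formula for $U$, one next obtains a closed form
\[
(U^k f)(x) \;=\; \int_{[0,1]^k} W_k(u_1,\dots,u_k)\, f^{(2k)}(x\,u_1 u_2\cdots u_k)\,du_1\cdots du_k,
\]
where, by tracking each inductive step (differentiating the inner integrand twice in $x$ produces an extra factor of $(u_1\cdots u_{k-1})^2$), the weight is the monomial $W_k(u) = u_1^{2(k-1)}u_2^{2(k-2)}\cdots u_{k-1}^{2}$. Because $W_k$ is bounded and nonnegative on $[0,1]^k$ with integral depending only on $k$, this immediately yields
\[
|(U^k f)(x)| \;\le\; C(k)\,\sup_{|y|\le |x|} |f^{(2k)}(y)|,
\]
and substituting $x=\sqrt{t}$, together with the evenness of $f^{(2k)}$, gives the required estimate \eqref{estg}.

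It remains to make $g$ smooth across $t=0$ and then to extend it to the full real line. Because $U^k f$ is continuous at $0$, the limit $\lim_{t\to 0^+}g^{(k)}(t)=2^{-k}(U^k f)(0)$ exists; a routine induction using the mean value theorem then shows that all one-sided derivatives of $g$ at $0$ exist and that $g$ is $C^\infty$ on $[0,\infty)$. To extend $g$ smoothly to $t<0$ one invokes Borel's theorem to build a $C^\infty$ function on $\rrr$ with the prescribed Taylor coefficients $\{2^{-k}(U^k f)(0)/k!\}_{k\ge 0}$ at the origin, and glues it to $g|_{[0,\infty)}$ via a smooth cutoff; the identity $f(x)=g(x^2)$ is unaffected by this extension since it only constrains $g$ on $[0,\infty)$. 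The main obstacle, I expect, is the bookkeeping that produces the explicit monomial weight $W_k$ in the iterated integral; everything else is essentially routine once that representation is in hand.
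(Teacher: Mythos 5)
Your proof is correct, and it takes a genuinely different route from the paper's. The paper simply invokes Whitney's theorem \cite{W} to get a smooth $g$ with $f(t)=g(t^2)$, and then establishes the estimate by writing down a closed Taylor-remainder formula in one integration variable,
\[
\frac{g^{(k)}(t^2)}{k!}=2^{-2k}\,k\binom{2k}{k}\int_0^1(1-s^2)^{k-1}\frac{f^{(2k)}(st)}{(2k)!}\,ds,
\]
from which the bound and the normalization $g^{(k)}(0)/k!=f^{(2k)}(0)/(2k)!$ both follow at once. You instead build $g$ from scratch via the operator $U h=h'/x$, establish $g^{(k)}(t)=2^{-k}(U^kf)(\sqrt t)$, and unfold this into the $k$-fold integral with the monomial weight $W_k(u)=u_1^{2(k-1)}\cdots u_{k-1}^2$; I checked the recursion $W_k=W_{k-1}\,(u_1\cdots u_{k-1})^2$ and the base case $W_1\equiv1$, and they are right, as is the conclusion $|g^{(k)}(t)|\le 2^{-k}\sup_{0\le s\le\sqrt t}|f^{(2k)}(s)|$. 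Your route has the advantage of being self-contained: it simultaneously reproves the Whitney-type smoothness claim (one-sided $C^\infty$ regularity at $0$ via continuity of $U^kf$, then a Borel/Seeley extension to $t<0$, which is indeed harmless since only $g|_{[0,\infty)}$ matters for $f(x)=g(x^2)$) and yields the estimate. The paper's route is shorter and produces a tidier single-integral remainder formula at the cost of citing Whitney. Your iterated integral is essentially the paper's after collapsing the $u$-variables onto their product $s=u_1\cdots u_k$, so the two proofs of the estimate are close cousins; the real difference is that you prove the existence/smoothness part rather than quote it.
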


\begin{proof}
By Whitney's theorem \cite{W}, there is a smooth function $g$ on the real line such that 
\[
f(t) = g(t^2)
\]
for all real $t$.

To see the last assertion we use the following representation of the remainder in Taylor's theorem:
\begin{align*}
\f{g^{(k)}(t^2)}{k!} &=  (2t)^{-2k+1} k {2k \choose k }\int_0^t (t^2-s^2)^{k-1} \f{f^{(2k)} (s) }{(2k)!}\, ds \\
&=  2^{-2k } k {2k \choose k }\int_0^1 (1-s^2)^{k-1} \f{f^{(2k)} (st) }{(2k)!}\, ds
\end{align*}
from which one easily derives \eqref{estg}. This yields in particular that 
\[
\f{g^{(k)}(0)}{k!} =\f{f^{(2k)} (0) }{(2k)!}
\]
since
\[
2^{-2k } k  {2k \choose k }\int_0^1 (1-s^2)^{k-1} ds =2^{-2k } k  {2k \choose k } \frac{\Gamma(k)\Gamma(1/2)}{\Gamma(k+1/2)}  = 1.  
\]
\end{proof}

The composition $\vp\mapsto (\vp^o)^O=\vp^{rad}$ gives rise to a homomorphism from 
$\cs(\rn) \to \cs_{rad}(\rn)$ which reduces to
the identity map on radial Schwarz functions. 
In particular, the map $\vp\mapsto\vp^o$ defines a one-to-one correspondence between
radial Schwartz functions on $\rn$ and even Schwartz functions on the real line. 
Moreover, $\vp$ is radial if and only if $\vp=\vp^{rad}$.

\begin{proposition}
For $ u \in \cs'_{rad}(\rn)$ and $ \vp\in \cs(\rn) $ we have
\[
\langle u,\vp \rangle = \langle u ,\vp^{rad} \rangle\, . 
\]
\end{proposition}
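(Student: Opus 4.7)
The plan is to exploit the rotational invariance of $u$ by averaging $\vp$ over the orthogonal group $O(n)$ with respect to normalized Haar measure $dA$. Since $u = u \circ A$ for every $A \in O(n)$, we have $\langle u, \vp \rangle = \langle u, \vp \circ A \rangle$ for every such $A$, so averaging over $O(n)$ trivially gives
\[
\langle u, \vp \rangle = \int_{O(n)} \langle u, \vp \circ A \rangle \, dA.
\]
The goal is then to pull the integral inside the pairing and identify the resulting Schwartz function.

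First I would verify the pointwise identity
\[
\int_{O(n)} \vp(A^{-1} x) \, dA = \vp^{rad}(x).
\]
For $x \neq 0$ this follows from the standard fact that the pushforward under $A \mapsto A^{-1} x$ of the Haar measure on $O(n)$ is the normalized surface measure on the sphere of radius $|x|$; the value at $x=0$ is $\vp(0)$, which also matches $\vp^{rad}(0)$. Thus the averaged function is exactly $\vp^{rad}$.

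Next I would justify the interchange. The key technical point is that the map $A \mapsto \vp \circ A^{-1}$ is continuous from the compact group $O(n)$ into $\cs(\rn)$ equipped with its usual Fr\'echet topology: given any Schwartz seminorm $\rho_{N,\alpha}(\psi) = \sup_x (1+|x|)^N |\p^\alpha \psi(x)|$, continuity follows because derivatives of $\vp \circ A^{-1}$ are polynomial expressions in the entries of $A^{-1}$ composed with derivatives of $\vp$, and $|A^{-1}x| = |x|$. Since $O(n)$ is compact and Hausdorff and $\cs(\rn)$ is a complete locally convex space, the $\cs(\rn)$-valued Bochner (or Pettis) integral $\Phi := \int_{O(n)} \vp \circ A^{-1} \, dA$ exists and, evaluated pointwise via the continuous linear functionals $\psi \mapsto \psi(x)$, agrees with $\vp^{rad}$ from the previous step.

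Finally, continuity of $u$ on $\cs(\rn)$ lets us commute $u$ with the Bochner integral, giving $\int_{O(n)} \langle u, \vp \circ A^{-1} \rangle dA = \langle u, \Phi \rangle = \langle u, \vp^{rad} \rangle$, which combined with the first display yields the claim. The main obstacle is precisely this commutation: one can avoid Bochner integration theory by approximating the Haar integral by Riemann sums $\frac{1}{N} \sum_j \vp \circ A_j^{-1}$, showing these converge to $\vp^{rad}$ in $\cs(\rn)$ (which again reduces to uniform continuity of $A \mapsto \vp \circ A^{-1}$ on the compact group $O(n)$), and then applying the continuous linear functional $u$ termwise. Either route gives the result.
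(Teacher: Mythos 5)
Your argument is correct, but it takes a genuinely different route from the paper. The paper first observes, by a change of variables, that the identity $\langle u,\vp\rangle=\langle u,\vp^{rad}\rangle$ holds when $u$ is a polynomially bounded locally integrable function; it then handles a general radial tempered distribution $u$ by mollifying with a radial approximate identity $\psi_\ve$, noting that $\psi_\ve*u$ is a radial, polynomially bounded smooth function to which the first step applies, and passing to the limit $\ve\to 0$ in $\cs'(\rn)$. You instead average $\vp$ over the compact group $O(n)$ against normalized Haar measure, identify the pointwise average with $\vp^{rad}$ (using that the pushforward of Haar measure under $A\mapsto A^{-1}x$ is the normalized surface measure on the sphere of radius $|x|$), and then commute $u$ with the group integral by treating $A\mapsto \vp\circ A^{-1}$ as a continuous $\cs(\rn)$-valued map on a compact group, via a Bochner/Pettis integral or, more elementarily, via Riemann sums converging in the Fr\'echet topology. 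Your approach is conceptually cleaner in that it works directly with the symmetry group and never leaves the test-function side; its cost is the vector-valued integration (or Riemann-sum) machinery and the verification that $A\mapsto\vp\circ A^{-1}$ is continuous into $\cs(\rn)$, which you correctly flag as the main technical point. The paper's approach avoids any group-valued integration at the price of a density argument on the distribution side, and it implicitly relies on the fact (established via the Whitney lemma elsewhere in the paper) that $\vp^{rad}$ is again a Schwartz function so that the pairing $\langle u,\vp^{rad}\rangle$ makes sense---a fact you also use when you identify the $\cs(\rn)$-valued integral with $\vp^{rad}$. Both routes are complete and correct.
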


\begin{proof}
By a simple change of variables the formula holds for any $u$ which is a polynomially bounded locally integrable function.
Next we fix a tempered distribution $u$ on $\rn$ and we consider a radial Schwartz function $\psi$ with integral $1$ and we set $\psi_\ve(x) = \ve^{-n} \psi(x/\ve)$. Then we notice that the convolution of 
$\psi_\ve * u$ converges to $u$ in $\cs'(\rn)$ as $\ve\to 0$. Hence, since the claim holds if $u$ is replaced by $\psi_\ve * u$
by the first observation, it remains true in the limit $\ve\to 0$.
\end{proof}

In particular, note that a radial distribution is uniquely determined by its action on radial Schwartz functions.
Furthermore, given a distribution $u\in \cs'(\rn)$ we can define a radial distribution $u^{rad}\in \cs'_{rad}(\rn)$ via
\[
\langle u^{rad},\vp \rangle := \langle u ,\vp^{rad} \rangle.
\]
Moreover, $u$ is radial if and only if $u=u^{rad}$.

For $n\in \mathbf Z^+$ we denote by $\mathcal R_n = r^{n-1}\mathcal S_{even}(\rrr)$ the space of functions of the
form $\psi(r)r^{n-1}$, where $\psi$ is an even Schwartz function on the line. This space inherits the topology of 
$S(\rrr)$ and its dual space is denoted by $\mathcal R_n'$.
Two distributions $w_1, w_2 \in\mathcal S'(\rrr)$ are equal in the space $\mathcal R_n'$ if for all 
even Schwartz functions $\psi$ on the line we have:
\[
\langle w_1, r^{n-1} \psi(r) \rangle = \langle w_2, r^{n-1} \psi(r) \rangle \, . 
\]
Note that in dimension $n\ge 2 $ we have that all distributions
of order $n-2$ supported at the origin equal
the zero distribution
 in the space $\mathcal R_n'$. Thus two radial distributions $w_1$ and $w_2$ are equal in 
$\mathcal R_n'$ whenever $w_1-w_2$ is a 
sum of derivatives of the Dirac mass at the origin of 
order at most $n-2$.

One may build radial distributions on $\rn$ starting from
distributions in $\mathcal R_n'$. 
Indeed, given $u_\diamond$ in $\mathcal R_n'$ 
and $\vp$ in $\cs(\rn)$ we define a radial distribution $u$ by setting 
\[
\langle\, u,\vp\,\rangle\, := \, \frac{\om_{n-1}}{2} \langle\, u_\diamond , \vp^o (r) r^{n-1}\, \rangle 
\]
The converse is the content of the following proposition.

\begin{proposition}
The map $\mathcal R_n \to \mathcal S_{rad}(\rn)$, $\psi(r) r^{n-1}\mapsto \psi^O(x)$ is
a homeomorphism and hence for every radial distribution $u$ we can define $u_\diamond$
in $\mathcal R_n'$ via
\[
\langle\, u_\diamond , \psi(r)r^{n-1} \rangle\, := \, \f{2}{\om_{n-1}} \langle\, u,\psi^O\,\rangle.
\]
\end{proposition}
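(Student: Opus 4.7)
My plan is to factor the map $\Phi\colon \psi(r)r^{n-1}\mapsto \psi^O$ as a composition $\mathcal R_n \to \cs_{even}(\rrr) \to \cs_{rad}(\rn)$, where the first arrow is division by $r^{n-1}$ and the second is the lift $\psi\mapsto \psi^O(x)=\psi(|x|)$. Once each arrow is shown to be a homeomorphism of locally convex spaces, the displayed formula for $u_\diamond$ follows automatically, since $\psi(r)r^{n-1}\mapsto \frac{2}{\om_{n-1}}\langle u,\psi^O\rangle$ is the composition of a continuous linear map (namely $\Phi$) with the tempered distribution $u$.

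For the lift $\psi\mapsto\psi^O$, the candidate inverse is restriction to an axis, $\vp\mapsto\psi$ with $\psi(t)=\vp(te_1)$: radiality of $\vp$ makes $\psi$ even, smoothness of $\vp$ makes $\psi$ smooth, the identity $\vp(x)=\psi(|x|)$ follows from rotation invariance, and Schwartz seminorms of $\psi$ on $\rrr$ are obviously dominated by those of $\vp$ on $\rn$ since one merely restricts to a line. The harder direction---bounding Schwartz seminorms of $\psi^O$ by those of $\psi$---is where Lemma~\ref{whitney} enters. Writing $\psi(r)=g(r^2)$ with $g\in C^\infty(\rrr)$ satisfying \eqref{estg}, one has $\psi^O(x)=g(|x|^2)$, which is smooth on all of $\rn$. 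An iterated chain rule (Fa\`a di Bruno) expresses each $\partial^\alpha\psi^O(x)$ as a polynomial in $x$ whose coefficients are $g^{(j)}(|x|^2)$ for $0\le j\le|\alpha|$. I would then estimate $|g^{(j)}(t)|$ in two regimes: on $t\ge 1$ the chain rule expresses $g^{(j)}(t)$ as a combination of $\psi^{(k)}(\sqrt t)$ divided by powers of $\sqrt t$, which decays rapidly because $\psi$ is Schwartz; on $0\le t\le 1$ the Whitney estimate \eqref{estg} bounds $|g^{(j)}(t)|$ by $\sup_{0\le s\le 1}|\psi^{(2j)}(s)|$, a Schwartz seminorm of $\psi$. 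Multiplied by the polynomial factors in $x$, this gives the required Schwartz bounds on $\psi^O$.

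For the division map $\mathcal R_n\to\cs_{even}(\rrr)$, $\psi(r)r^{n-1}\mapsto\psi$, its inverse (multiplication by $r^{n-1}$) is trivially continuous. For the division itself I split by the size of $r$. On $|r|\ge 1$, Leibniz applied to $\psi(r)=h(r)/r^{n-1}$ bounds $\psi^{(k)}$ directly by Schwartz seminorms of $h$. On $|r|\le 1$ I use Taylor's theorem at the origin: since $h=\psi r^{n-1}$ with $\psi$ even, the only nonzero Taylor coefficients of $h$ at $0$ occur in orders $j=n-1+2k$ and coincide, up to combinatorial factors, with the even Taylor coefficients $\psi^{(2k)}(0)$; the integral form of Taylor's remainder for $h$ then controls $\psi^{(k)}(r)$ uniformly on $[-1,1]$ in terms of $\sup_{|r|\le 1}|h^{(M)}(r)|$, which is again a Schwartz seminorm of $h$. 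The main technical obstacle throughout is this local analysis near the origin---for the lift, the lack of smoothness of $r\mapsto|x|$ at $x=0$, and for the division, the zero of $r^{n-1}$ at $r=0$---and both obstacles are dissolved by Lemma~\ref{whitney} together with Taylor expansion.
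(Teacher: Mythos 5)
Your proof is correct, and its technical core matches the paper's: Whitney's lemma (Lemma~\ref{whitney}) supplies smoothness of $\psi(|x|)$ across the origin, an iterated Taylor/fundamental-theorem-of-calculus estimate neutralizes the zero of $r^{n-1}$ at $r=0$, and the region $|x|\ge 1$ is handled by a direct Leibniz computation. What you do differently is structural: you factor $\psi(r)r^{n-1}\mapsto\psi^O$ through $\cs_{even}(\rrr)$ and verify separately that division by $r^{n-1}$ (a Hadamard-type lemma via the integral form of the Taylor remainder) and the radial lift $\psi\mapsto\psi^O$ (Whitney plus Fa\`a di Bruno) are each homeomorphisms. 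The paper instead proves in one shot the combined estimate
\[
\sup_{x\in\rn}\big|x^\alpha\partial^\beta_x\big(\psi(|x|)\big)\big| \le \sum_{0\le\ell,m\le 4(|\alpha|+|\beta|+n)}\sup_{r>0}\Big|r^m\Big(\frac{d}{dr}\Big)^\ell\big(r^{n-1}\psi(r)\big)\Big|,
\]
with its inequalities \eqref{hyref}--\eqref{hyref2} doing precisely the work of your Taylor-remainder step for the division. Your modular route has the advantage of isolating the two obstructions and of making the inverse direction (restrict to a line, then multiply by $r^{n-1}$) explicit, which the paper leaves implicit; the paper's single estimate is more compact. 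Both routes are valid.
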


\begin{proof}
It suffices to show the first claim. To this end we will show that for all 
multiindices $\al$ and $\be$ we have
\[
\sup_{x\in \rn}|x^\al \p^\be_x ( \psi (|x|)) | \le \sum_{0\le \ell,m\le 4(|\be|+|\al|+n)} \sup_{r>0} |r^m \Big(\f{d}{dr}\Big)^\ell (r^{n-1}\psi(r)) | \, .
\]
First we consider the case $|x|\le 1$. Setting $r=|x|\le 1$ we have
\begin{align*}
| x^\al \p^\be_x ( \psi (|x|)) | & \le C_{ \be} |x|^{|\al|}
 \sum_{k=0}^{|\be|}|x|^k |g^{(k)}(|x|^2)| 
 = C_{ \be} 
 \sum_{k=0}^{|\be|}| r^{k+|\al|} g^{(k)}(r^2)| \\ 
 &\le C_{ \be} 
 \sum_{k=0}^{|\be|}| g^{(k)}(r^2)| 
 \le C_{ \be} 
 \sum_{k=0}^{|\be|} C(k) \sup_{0<s<r} |\psi^{(2k)}(s)| \, , 
\end{align*}
using Lemma~\ref{whitney} with $\psi(t)=g(t^2)$. 

We will make use of the inequality
\begin{equation}\label{hyref}
| \psi (s) | \le \sup_{0<t<s} \Big| \Big(\f{d}{dt}\Big)^{M} (t^{M}\psi (t) )(s) \Big|
\end{equation}
which follows by applying the fundamental theorem of calculus $M$ times and of the
identity:
\begin{equation}\label{hyref2}
s^M \f{d^m\psi}{ds^m} (s) = \sum_{\ell=0}^m (-1)^\ell \ell! {m \choose \ell} {M \choose \ell} 
\Big(\f{d}{ds}\Big)^{m-\ell} (s^{M-\ell}\psi (s) ) 
\end{equation}
which is valid for $M\ge m$ and is easily proved by induction. 

Applying \eqref{hyref} to $\psi^{(2k)}(s)$ we obtain 
\begin{equation}\label{hyref3}
| \psi^{(2k)}(s) | \le \sup_{0<t<s} \Big| \Big(\f{d}{dt}\Big)^{M} (t^{M} \psi^{(2k)} (t) )(s) \Big|
\end{equation}
and using \eqref{hyref2} for $s^{M} \psi^{(2k)} (s)$ with $M=2|\be|+n-1$ and $m=2k$ we deduce that 
$| \psi^{(2k)}(s) | $ is pointwise bounded by a sum of derivatives of terms $s^{n-1}\psi(s)$ multiplied by 
powers of $s$. It follows that $\sup_{s>0} |\psi^{(2k)}(s)|$ is controlled by a finite sum of Schwartz 
seminorms of the function $s^{n-1}\psi(s)$. 

The case $|x|\ge 1$ is easier since when $|\be|\neq 0$
\[
| \p^\be_x ( \psi(|x|))| \le \sum_{j=1}^{|\be|}| \psi{(j)}(|x|)| \f{C_{j,\be}}{|x|^{|\be|-j}}\, , 
\]
and taking $M=\max ( |\al|, |\be|+n-1)$ we have 
\begin{equation}\label{uu1555}
\sup_{|x|\ge 1} \big|x^\al \p^\be_x (\psi(|x|)) \big| 
\le C_{\be} \sum_{j=1}^{|\be|}\sup_{s\ge 1} \big\{ s^{M} |\psi^{(j)}(s)|\big\}\, , 
\end{equation}
which is certainly controlled by a finite sum of Schwartz seminorms of $s^{n-1}\psi(s)$ in view of \eqref{hyref2}.
\end{proof}

Note that if $u$ is given by a function $f(x)$, then $u_\diamond$ is given by the function $f^o(x)$. We also remark
that the map $\frac{1}{r} \frac{d}{dr}$ is a homomorphism from $\mathcal R_n'$ to $\mathcal R_{n+1}'$
defined as the dual map of $- \frac{d}{dr}\frac{1}{r}$.

A related approach defining $u_\diamond$ for a given distribution $u$ supported in $\rn\setminus\{0\}$ can be found in \cite{szmydt}. Our approach does not impose restrictions on the support of the distribution.

\section{The extension to tempered distributions}
\label{distr}

Let $u$ be a radial distribution on $\mathbf R^k$ and let $F_{k}(u)$ be the $k$-dimensional 
Fourier transform of $u$. 

\begin{theorem} \label{distribution}
Given an even tempered distribution $v_0$ on the real line, define radial distributions $v_n$ on $\rn$ and $v_{n+2}$ on $\mathbf R^{n+2}$ via the identities
\begin{equation}\label{665}
\langle v_n, \vp \rangle = \langle v_0, \tfrac{1}{2}\omega_{n-1} r^{n-1} \vp^o \rangle
\end{equation}
for all radial Schwartz functions $\vp(x)$ = $\vp^o(|x|)$ on $\rn$ and 
\[
\langle v_{n+2}, \vp \rangle = \langle v_0, \tfrac{1}{2} \omega_{n+1} r^{n+1} \vp^o \rangle
\]
for all radial Schwartz functions $\vp(x)$ = $\vp^o(|x|)$ on $\mathbf R^{n+2}$. 

Let $u^n=F_n(v_n) $ and $u^{n+2}=F_{n+2}(v_{n+2})$. Then the identity
\begin{equation}\label{newformula}
 -\f{1}{2\pi r}\f{d}{dr} u_\diamond^n = u_\diamond^{n+2} 
\end{equation}
holds on $\mathcal R_{n+2}'$. 
\end{theorem}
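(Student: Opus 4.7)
The plan is to verify \eqref{newformula} by pairing both sides against an arbitrary test element $\chi(r)\,r^{n+1}\in\mathcal{R}_{n+2}$ with $\chi\in\cs_{even}(\mathbf{R})$, unraveling each side to a pairing of $v_0$ against a Schwartz function on the line, and then matching the two expressions via the Schwartz-case identity \eqref{55} of Theorem~\ref{schwartz}.

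For the right-hand side $\langle u_\diamond^{n+2},\chi r^{n+1}\rangle$ the chain of unravelings is short. By the definition of $(\cdot)_\diamond$ the pairing equals $\f{2}{\om_{n+1}}\langle u^{n+2},\chi^O\rangle$; moving the Fourier transform from $u^{n+2}=F_{n+2}(v_{n+2})$ onto the radial test function $\chi^O$ via $F_{n+2}(\chi^O)=(\cf_{n+2}\chi)^O$, and then invoking the defining formula for $v_{n+2}$ in terms of $v_0$, cancels the $\om_{n+1}$ factors and collapses the side to $\langle v_0,\,r^{n+1}\cf_{n+2}(\chi)(r)\rangle$.

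For the left-hand side I would use the duality definition of $\f{1}{r}\f{d}{dr}$ as the dual of $-\f{d}{dr}\f{1}{r}$. The first thing to verify is that this adjoint really sends $\mathcal{R}_{n+2}$ continuously into $\mathcal{R}_n$; the key observation is that if $\chi$ is even Schwartz then $r\chi'(r)$ is again even Schwartz, so
\[
\f{d}{dr}\bigl(\chi(r)\,r^n\bigr)=r^{n-1}\phi(r), \qquad \phi(r):=r\chi'(r)+n\chi(r)\in\cs_{even}(\mathbf{R}).
\]
Consequently $\langle -\f{1}{2\pi r}\f{d}{dr}u_\diamond^n,\chi r^{n+1}\rangle=\f{1}{2\pi}\langle u_\diamond^n,r^{n-1}\phi\rangle$, and running the same chain of unravelings as above (now with $v_n$, $u^n$, $\phi$ in place of $v_{n+2}$, $u^{n+2}$, $\chi$) reduces this to $\f{1}{2\pi}\langle v_0,\,r^{n-1}\cf_n(\phi)(r)\rangle$.

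It remains to reconcile the two expressions, which amounts to the identity $\cf_n(\phi)(r)=2\pi r^2\cf_{n+2}(\chi)(r)$; but this is precisely \eqref{55} applied with $\vp=\chi$, since
\[
s^{-n+1}\f{d}{ds}\bigl(\chi(s)\,s^n\bigr)=s\chi'(s)+n\chi(s)=\phi(s).
\]
The main obstacle is therefore not analytic but notational: keeping the duality pairings on the correct function spaces, verifying continuity of $-\f{d}{dr}\f{1}{r}:\mathcal{R}_{n+2}\to\mathcal{R}_n$, and shepherding the constants $\om_{n\pm 1}$ and the powers of $r$ through the cancellations cleanly. Once the above reduction is made, everything follows from the already-established identity \eqref{55}.
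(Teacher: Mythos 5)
Your proposal is correct and follows essentially the same route as the paper's proof: you unravel each side through the definitions of $(\cdot)_\diamond$, Fourier-transform duality, and the defining relations between $v_n$, $v_{n+2}$, and $v_0$, reducing everything to a pairing against $v_0$, and then match the two sides via \eqref{55} with $\eta = \phi = n\chi + r\chi'$ playing exactly the role of the paper's $\eta = r^{-n+1}(r^n\psi)'$. The only difference is organizational (you compute both sides separately to a common target, whereas the paper transforms the equality step by step); the constants, signs, and the key appeal to \eqref{55} all check out.
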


\begin{proof}
We denote by $\langle \cdot , \cdot \rangle_n$ the action of the distribution on a function in dimension $n$. 
Let $\psi(r)$ be an even Schwartz function on the real line. Then we need to show that 
\begin{equation}\label{tp1}
\Big\langle -\f{1}{2\pi r}\f{d}{dr} u_\diamond^n, \om_{n+1} r^{n+1} \psi(r) \Big\rangle_1 =
\big\langle u_\diamond^{n+2} , \om_{n+1} r^{n+1} \psi(r) \big\rangle_1 \, . 
\end{equation}
This is equivalent to showing that 
\begin{equation}\label{tp2}
\f{1}{2\pi } \big\langle u_\diamond^n , \om_{n+1} (r^{n } \psi(r))' \big\rangle_1 =
\big\langle u_\diamond^{n+2} , \om_{n+1} r^{n+1} \psi(r) \big\rangle_1 \, . 
\end{equation} 
We introduce the even Schwartz function $\eta(r) = r^{-n+1} (r^{n } \psi(r))'=n\psi(r) +r\psi'(r)$ on the real line and functions $\eta^O$ on $\rn$ and $\psi^O$ on $\mathbf R^{n+2}$ by setting 
\[
\psi^O (x) = \psi (|x|) \qq \qq\qq \eta^O(y) = \eta (|y|) 
\]
for $y\in \rn$ and $x\in \mathbf R^{n+2}$. Then \eqref{tp2} is equivalent to 
 \begin{equation}\label{tp3}
\f{1}{2\pi }\f{\om_{n+1}}{\om_{n-1}} \big\langle u_\diamond^n , \om_{n-1} r^{n-1} \eta(r) \big\rangle_1 =
 \big\langle u_\diamond^{n+2} , \om_{n+1} r^{n+1} \psi(r) \big\rangle_1
\end{equation} 
which is in turn equivalent to 
 \begin{equation}\label{tp4}
\f{1}{2\pi }\f{\om_{n+1}}{\om_{n-1}} \big\langle F_n(v_n) , \eta^O \big\rangle_n =
 \big\langle F_{n+2}(v_{n+2}) , \psi^O \big\rangle_{n+2} 
\end{equation} 
and also to 
 \begin{equation}\label{tp5}
\f{1}{2\pi }\f{\om_{n+1}}{\om_{n-1}} \big\langle v_n , F_n(\eta^O) \big\rangle_n =
 \big\langle v_{n+2}, F_{n+2}(\psi^O) \big\rangle_{n+2} \, . 
\end{equation} 
 
We now switch to dimension one by writing \eqref{tp5} equivalently as 
 \begin{equation}\label{tp6}
\f{1}{2\pi }\f{\om_{n+1}}{\om_{n-1}} \big\langle v_0 , \om_{n-1}r^{n-1} \mathcal F_n(\eta )(r) \big\rangle_1 =
 \big\langle v_{0}, \om_{n+1} r^{n+1}\mathcal F_{n+2}(\psi ) (r) \big\rangle_{1} \, . 
\end{equation} 
But this identity holds if
\[
\f{1}{2\pi} \mathcal F_n(\eta )(r) = r^2 \mathcal F_{n+2}(\psi ) (r)\, , 
\]
which is valid as a restatement of \eqref{55}; recall that $\eta(r) = r^{-n+1} \f{d}{dr} (r^{n } \psi(r) ).$
This proves \eqref{tp1}. 
\end{proof}
 
It is straightforward to check that for polynomially bounded smooth functions all operations
coincide with the usual ones. We end this section with a few more illustrative examples.
Let $\de_n$ be the Dirac mass on $\rn$.

\noindent {\bf Examples:}

a) Let $ v_n = \delta_n$. One can see that 
\[
v_0 = \f{2\, (-1)^{n-1} }{ \omega_{n-1} (n-1)! } \Big(\f{d } {dr}\Big)^{(n-1)}(\delta_1)
\]
satisfies \eqref{665}. 
Acting $v_0$ on $r^{n+1}\vp^o(r)$ yields that 
$ v_{n+2} = 0$ and thus $u_\diamond^{n+2}=0$. Also $u_\diamond^n=1$; so both sides of \eqref{newformula} are
equal to zero. 

b) Let $v_{n+2}=\delta_{n+2}$. Then 
\[
v_0 = \f{2\, (-1)^{n+1} }{ \omega_{n+1} (n+1)! } \Big(\f{d } {dr}\Big)^{(n+1)}(\delta_1)\, . 
\]
Let $\De=\p_1^2+\cdots + \p_n^2$ be the Laplacian. We claim that the distribution 
 \begin{equation}\label{oopp}
 v_n = \f{ \omega_{n-1}}{ \omega_{n+1}} \f{1}{ n}\Delta(\delta_n) 
\end{equation}
 satisfies \eqref{665}. 
 Then $u_\diamond^{n+2}=1$ and also $u_\diamond^n = -r^2 (2\pi )^2 \omega_{n-1} / (2 n \omega_{n+1})$. Thus 
 \eqref{newformula} is valid since $2\pi \om_{n-1}=n\om_{n+1}$. 
 
 It remains to prove
 that the distribution $v_n$ in \eqref{oopp} satisfies \eqref{665}. 
 For $\vp(x) = \vp^o(|x|)$ in $\mathcal S(\rn)$ we have
 \begin{equation}\label{oopp2}
\big\langle v_n, \vp \big\rangle = \langle v_0, \om_{n-1} r^{n-1} \vp^o(r) \big\rangle = 
\f{\om_{n-1}}{\om_{n+1}} \f{2}{(n+1)!} \langle \de_1 , (r^{n-1} \vp^o(r))^{(n-1)}\big\rangle 
\end{equation}
and one notices that the $(n\!-\!1)$st derivative of $r^{n-1} \vp^o(r)$ evaluated at zero is equal to 
$\f12 (n+1)! (\vp^o)''(0)$. To compute the value of this derivative we use Lemma \ref{whitney} to 
write $\vp(x)= \vp^o(|x|) = g(|x|^2)$ where $g'(0)= \f12 (\vp^o)''(0)$. It follows that 
$ g'(0) = \f 1{2n} \De (\vp)(0)$. Combining these 
observations yields that the expression in \eqref{oopp2} is equal to 
\[
\f{\om_{n-1}}{\om_{n+1}} \f{1}{n} \De (\vp)(0) =\Big\langle 
\f{\om_{n-1}}{\om_{n+1}} \f{1}{n} \De (\de_n), \vp \Big\rangle \, , 
\]
which proves the claim.

\begin{remark}
As pointed out in Remark~\ref{remhankel}, the action of the Fourier transform on the associated function on the reals $\vp^o$
is given by the Hankel transform. In particular, the results in this section also give a natural extension of the Hankel transform
(for half-integer order) to distributions. Of course this coincides with the usual approach, see \cite{SP, Z, zemanian} and the
references therein. To this end observe that the space $F$ used in \cite{SP} is precisely the set of functions on $[0,\infty)$ which
extend to an even Schwartz function on $\rrr$.
\end{remark}

 \section{Applications}

We begin with a simple example.
In dimension one we have that the Fourier transform 
of $\textup{sech}(\pi |x|) $ is $\textup{sech}(\pi |\xi|) $. It follows from 
\eqref{1000-1} that in dimension three we have
\[
F_3( \sech(\pi |x|) ) (\xi) = \frac{1}{2|\xi|} \sech(\pi |\xi |) \tanh(\pi |\xi |) \, . 
\]
since 
\[
 \frac{d}{dr} \frac{2}{\E^{\pi r}+\E^{-\pi r} } = - 2\pi \frac{\E^{\pi r}-\E^{-\pi r} }{(\E^{\pi r}+\E^{-\pi r} )^2} =-2\pi \frac 12 \sech(\pi r) \tanh(\pi r)
\]
Continuing this process, one can explicitly calculate the Fourier transform of $\sech(\pi |x|)$ in all odd dimensions. 
 
More sophisticated applications of our formulas appear in computations of functions of the Laplacian $-\Delta$, which arise in numerous applications.
For example, in quantum mechanics the Laplacian $-\Delta$ arises as the free Schr\"odinger operator (cf., e.g., \cite{reedsimon2}, \cite{teschl})
and functions $f(-\Delta)$ are defined via the spectral theorem by
\[
f(-\Delta) \vp = K * \vp, \qquad \vp\in\cs(\rn),
\]
where $K$ is the tempered distribution given by the inverse Fourier transform of the radial function $f(4\pi^2 |\xi|^2)$, which is assumed polynomially bounded. Knowledge of the inverse Fourier transform of $f(4\pi^2 |\xi|^2)$, for $\xi\in \rrr$ and $\xi\in \rrr^2$, yields explicit formulas for the kernel $K$ of $f(-\Delta)$ in all dimensions. 

An important application is the explicit calculation of the $n$-dimensional kernel $G_n(x)$ for the resolvent associated with the function $f(r)=(r-z)^{-1}$, $z\in\ccc\backslash [0,\infty)$. In the one-dimensional case, an easy computation shows that
\[
G_1(x)= \frac{1}{2\sqrt{- z}} \E^{-\sqrt{- z}\, |x|}.
\]
Hence, by the $L^2$ version of Theorem~\ref{schwartz} (cf.\ the discussion right after Theorem~\ref{schwartz}) the three-dimensional kernel is given by
\[
G_3(x)= -\frac{1}{2\pi r} \frac{d}{dr} G_1(r)\Big|_{r=|x|} = \frac{1}{4\pi |x|} \E^{-\sqrt{- z}\, |x|}.
\]
The computation of $G_5(x), G_7(x),\dots$ requires Theorem~\ref{distribution} since
the assumptions of Theorem~\ref{schwartz} are no longer satisfied. 
For instance, Theorem~\ref{distribution} gives 
\[
G_5(x) = \frac{1+|x| \sqrt{- z}}{8\pi^2 |x|^3} \E^{-\sqrt{- z}\, |x|} .
\]

Another interesting situation where our theorem is useful are the spectral projections associated with the function $f(r)=\chi_{[0,E]}(r)$, $E>0$. Again in the 
one-dimensional case the kernel 
for the resolvent can be easily computed and found to be 
\[
P_1(x)=\frac{\sin(x\sqrt{E})}{\pi x}.
\]
Thus 
by Theorem~\ref{schwartz} the three-dimensional kernel is given by
\[
P_3(x)= -\frac{1}{2\pi r} \frac{d}{dr} P_1(r)\Big|_{r=|x|} = \frac{\sin(|x|\sqrt{E}) - |x|\sqrt{E}\cos(|x|\sqrt{E})}{2\pi^2 |x|^3}.
\]

Finally, the Fourier transform is a crucial tool in solving constant coefficient linear partial differential equations (cf., e.g, \cite{evans}).
Using the above trick one can of course derive the fundamental solution for the heat (or Schr\"odinger) equation in three dimensions
from the one-dimensional one. However, since the three-dimensional case is no more difficult than the one-dimensional case we
 rather turn to the Cauchy problem for the wave equation
\[
u_{tt} - \Delta u =0, \qquad u(0,x)= \psi(x), \quad u_t(0,x)=\vp(x),
\]
in $\rn$, whose solution is given by
\[
u(t,x) = \cos(t \sqrt{-\Delta}) \psi(x) + \frac{\sin(t \sqrt{-\Delta})}{\sqrt{-\Delta}} \vp(x).
\]
Since the first term can be obtained by differentiating the second (with respect to $t$) it suffices to look only at the second and assume $\psi=0$.
Moreover, since the Fourier transform of $f(x)=\frac{\sin(a \pi x)}{a \pi x}$ is $F_1(f)(\xi)=|a|^{-1} \chi_{[-1/2,1/2]}(\xi/a)$, we obtain
\[
u(t,x) = \int_\rrr \frac{1}{2} \chi_{[-t,t]}(x-y) \vp(y) dy,
\]
which is of course just d'Alembert's formula. In order to apply Theorem~\ref{distribution} we use $v_0(r)= \frac{\sin(t r)}{r}$ such
that $u^1=F_1^{-1}(v_1)$ as well as $u^1_\diamond$ are associated with the function $\frac{1}{2} \chi_{[-t,t]}(x)$.
Hence by Theorem~\ref{distribution}
\[
\langle F_3^{-1}(v_3), \vp\rangle =
\frac{\om_2}{2} \left\langle -\frac{1}{2\pi r} \frac{d}{dr} \frac{1}{2} \chi_{[-t,t]}(r), r^2 \vp^o(r) \right\rangle =
\frac{\om_2}{4\pi} t \vp^o(t)
\]
and we obtain Kirchhoff's formula
\[
u(t,x) = \frac{t}{4\pi} \int_{\stwo} \vp(x- t \theta) d\theta.
\]

\medskip\noindent{\bf Acknowledgement.}
The authors thank Tony Carbery, Hans Georg Feichtinger, Tom H. Koornwinder, Michael Kunzinger, Elijah Liflyand,
Michael Oberguggenberger, Norbert Ortner, and Andreas Seeger for helpful discussions and hints with respect to the literature.

\end{document}